\documentclass[a4paper,12pt]{article}
\usepackage{amsmath,amsfonts,amssymb,amsthm,amscd}
\usepackage{pxfonts}
\usepackage{mathrsfs}
\usepackage{color}
\usepackage[all]{xy}
\usepackage{stmaryrd}

\newtheorem{theorem}{Theorem}[section]
\newtheorem{lemma}[theorem]{Lemma}

\newtheorem{corollary}[theorem]{Corollary}
\theoremstyle{definition}

\theoremstyle{remark}

\allowdisplaybreaks
\numberwithin{equation}{section}
\usepackage[colorlinks=true,linkcolor=blue,citecolor=blue,urlcolor=blue]{hyperref}
\begin{document}
\title{The infinite sum of the cubes of reciprocal balancing numbers}
\author{
 Subhasis Panda\\
\small Department of Mathematics, School of Advanced Sciences\\
\small VIT-AP University, Amravati-522241, Andhra Pradesh, India\\
\small {subhasispanda559@gmail.com}\\\\Aditya Kumar Dash\\ 
\small Department of Mathematics and Applied Statistics, School of Applied Sciences \\
\small KIIT Deemed to be University, Bhubaneswar-751024, India\\
\small {adityadash44@gmail.com}\\\\Utkal Keshari Dutta\\ 
\small Department of Mathematics and Applied Statistics, School of Applied Sciences \\
\small KIIT Deemed to be University, Bhubaneswar-751024, India\\
\small {utkal.duttafma@kiit.ac.in}
}
\date{}
\maketitle
\renewcommand{\thefootnote}{}
\footnote{2020 \textit{Mathematics Subject Classification.} 11B39, 11B37, 11D09.}
\footnote{\emph{Key words and phrases}: Balancing numbers, Lucas-balancing numbers, Pell equation, linear recurrence sequences.}
\renewcommand{\thefootnote}{\arabic{footnote}}
\setcounter{footnote}{0}

\begin{abstract}
In this paper, we study the infinite reciprocal cubic sum
$\sum_{k=n}^{\infty}\frac{1}{B_k^3}$ of balancing numbers. We obtain
explicit bounds for this sum and use them to determine the integer part
of its reciprocal.
\end{abstract}

\section{Introduction} \label{sec:1}
A positive integer $B_n$ is called a balancing number if it satisfies the balancing condition
\begin{equation*}
	1+2+\cdots+(B_n-1) = (B_n+1)+\cdots+(B_n+r)
\end{equation*}
for some positive integer $r$, called the balancer \cite{BeheraPanda1999}. Many authors have studied various properties of balancing numbers and their connections with square triangular numbers, Pell equations, and Diophantine equations \cite{TchammouTogbe2021,KaraatliKeskinZhu2014,PandaRay2011,Gozeri2018}.

Reciprocal sums associated with recurrence sequences have received considerable attention. For example, Ohtsuka and Nakamura~\cite{OhtsukaNakamura2008} studied infinite sums involving reciprocals of Fibonacci numbers. Zhang and Wang~\cite{ZhangWang2012} later studied the corresponding infinite sum for Pell numbers. Dutta and Ray~\cite{DuttaRay2020} studied partial finite alternating sums involving reciprocals of balancing and Lucas-balancing numbers. Panda et al.~\cite{Panda2018} studied reciprocal sums of balancing and Lucas-balancing numbers and obtained several bounds for these sums. In particular, for positive integers $n$ and $r\geq3$, they obtained bounds for $\displaystyle \sum_{k=n}^{\infty}\frac{1}{B_k^r}$. However, these bounds do not give the exact integer part of the reciprocal when $r=3$.

Xu and Wang~\cite{Xu2013} extended the result of Zhang and Wang~\cite{ZhangWang2012} to the infinite sum of the cubes of reciprocal Pell numbers. They obtained an explicit formula for the integer part of the reciprocal of the corresponding tail sum. In particular, they obtained the following formula.
\begin{equation*}
	\left[\left(\sum_{k=n}^{\infty}\frac{1}{P_k^3}\right)^{-1}\right] =
	\begin{cases}
		P_n^2P_{n-1}+3P_nP_{n-1}^2
		+\left\lfloor-\dfrac{61}{82}P_n-\dfrac{91}{82}P_{n-1}\right\rfloor,
		& \text{if $n$ is even},\\[6pt]
		P_n^2P_{n-1}+3P_nP_{n-1}^2
		+\left\lceil\dfrac{61}{82}P_n+\dfrac{91}{82}P_{n-1}\right\rceil,
		& \text{if $n$ is odd}.
	\end{cases}
\end{equation*}
Following the result of Xu and Wang~\cite{Xu2013}, we study the corresponding problem for balancing numbers in the cubic case and determine the integer part of the reciprocal of the corresponding infinite sum. We now present our main result in more detail.

Let $\{B_n\}_{n\geq0}$ be the balancing sequence, defined by $B_0=0$, $B_1=1$, and $B_n=6B_{n-1}-B_{n-2},$ for $n\geq2$. For each balancing number $B_n$, $8B_n^2+1$ is a perfect square. Its positive square root is denoted by $C_n$, which is called the corresponding Lucas-balancing number. Thus, the Lucas-balancing sequence $\{C_n\}_{n\geq0}$ satisfies the same recurrence $C_n=6C_{n-1}-C_{n-2}$,  $n\geq2$, with $C_0=1$ and $C_1=3$. The Binet formulas for these sequences are
\begin{equation*}
B_n=\frac{\lambda^n-\mu^n}{\lambda-\mu} \,\, \text{and} \,\, C_n=\frac{\lambda^n+\mu^n}{2},
\end{equation*}
where $\lambda=3+2\sqrt{2}$ and $\mu=3-2\sqrt{2}$.

The connection between balancing numbers and Pell numbers allows us to relate the reciprocal sums of balancing numbers to the results of Zhang and Wang~\cite{ZhangWang2012}. If $\{P_n\}$ denotes the Pell sequence, then $P_{2n}=2B_n$ and $P_{2n-1}=2B_{n-1}+C_{n-1}$. Since $P_{2n-1}=B_n-B_{n-1}$, it follows that $2B_{n-1}+C_{n-1}=B_n-B_{n-1}$. These relations allow us to write the term appearing in the correction term in terms of balancing numbers.

We consider the infinite reciprocal cubic sum $\displaystyle \sum_{k=n}^{\infty}\frac{1}{B_k^3}$. Our aim is to obtain explicit upper and lower bounds for this sum and use them to determine the integer part of its reciprocal. We give an explicit formula for this integer part in the cubic case. Our main result is the following.
\begin{theorem} \label{1.1}
	\normalfont	Let $\{B_n\}$ and $\{C_n\}$ denote the balancing and Lucas-balancing sequences, respectively. Then for every $n\geq 2$, we have
	\begin{equation*}
		\left\lfloor
		\left(\sum_{k=n}^{\infty}\frac{1}{B_k^3}\right)^{-1}
		\right\rfloor
		=
		B_n^3-B_{n-1}^3-
		\left\lceil
		\frac{3(2B_{n-1}+C_{n-1})}{164}
		\right\rceil.	
	\end{equation*}
\end{theorem}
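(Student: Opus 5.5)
The approach is the telescoping method used by Zhang--Wang and Xu--Wang. Write $P_{2n-1}=2B_{n-1}+C_{n-1}=B_n-B_{n-1}$ and set
\[
f(n)=B_n^3-B_{n-1}^3-\frac{3(B_n-B_{n-1})}{164}-\varepsilon_n ,
\]
with a small quantity $\varepsilon_n\in[0,1)$ to be tuned. The target is a pair of auxiliary functions $g_-(n)<g_+(n)$ such that for every $k\ge 2$
\begin{equation*}
\frac{1}{g_+(k)}-\frac{1}{g_+(k+1)}<\frac{1}{B_k^3}<\frac{1}{g_-(k)}-\frac{1}{g_-(k+1)} .
\end{equation*}
Summing from $k=n$ to $\infty$ telescopes, because $g_\pm(k)\to\infty$, and gives
\[
g_-(n)<\Big(\sum_{k\ge n}B_k^{-3}\Big)^{-1}<g_+(n).
\]
It then suffices to choose $g_-(n)=B_n^3-B_{n-1}^3-m_n$ and $g_+(n)=g_-(n)+1$, where $m_n=\lceil 3(B_n-B_{n-1})/164\rceil$. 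With that choice, the floor of the reciprocal equals $g_-(n)$, which is exactly the right-hand side of Theorem~\ref{1.1}.

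First I would justify the constant $3/164$ heuristically. By Binet,
\[
B_k^3=\frac{\lambda^{3k}-3\lambda^{k}\mu^{k}(\lambda^k-\mu^k)-\mu^{3k}}{(\lambda-\mu)^3},
\]
and $\lambda\mu=1$, so $B_k^3=(\lambda^{3k}-3\lambda^k+O(\lambda^{-k}))/(4\sqrt2)^3$. Next, write $G(k)=B_k^3-B_{k-1}^3-c(B_k-B_{k-1})$. Expanding $1/G(k)-1/G(k+1)-1/B_k^3$ in powers of $\lambda^{k}$, the $\lambda^{-3k}$ terms cancel automatically. The coefficient of the next order $\lambda^{-5k}$ (relative size $\lambda^{-2k}$) is linear in $c$, and forcing it to vanish should give $c=3/164$. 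Here $164=4\cdot 41$ comes from $\lambda^2+\lambda^{-2}=34$, $\lambda+\lambda^{-1}=6$ and $(1-\lambda^{-3})$-type factors, the analogue of $82$ in the Pell case.

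Having fixed $c=3/164$, the remaining error term is of order $\lambda^{-6k}$ or smaller. Consequently, shifting $G$ by any constant $\delta\in(0,1]$ in either direction produces a term of order $\delta\lambda^{-6k}$ with a definite sign. For the lower bound I would take $\delta=-(m_n-c(B_n-B_{n-1}))\in(-1,0]$, and for the upper bound $1-$ that quantity. The key step is then to clear denominators. Each inequality becomes $B_k^3\,(G(k+1)-G(k))\gtrless G(k)G(k+1)$, a polynomial inequality in $B_k,B_{k-1}$ (equivalently in $B_k$ and $C_k$, using $B_{k+1}=3B_k+C_k$, $B_{k-1}=3B_k-C_k$ and $C_k^2=8B_k^2+1$). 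I would reduce it to an inequality in the single variable $B_k$ plus a term linear in $C_k$, and verify it for $k\ge 2$, checking the small cases directly.

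The main obstacle is the boundary case at integrality. The fractional shift $\delta$ depends on $n$ through $\{3(B_n-B_{n-1})/164\}$, but the telescoping needs a fixed function of $k$. To handle this, I would instead prove the sharper two-sided estimate
\[
\Big(\sum_{k\ge n}B_k^{-3}\Big)^{-1}=B_n^3-B_{n-1}^3-\frac{3(B_n-B_{n-1})}{164}+O(\lambda^{-n}),
\]
with an explicit error smaller than the distance from $3(B_n-B_{n-1})/164$ to the nearest integer above it. That distance is at least $1/164$ unless the quantity is an integer. In the integer case the sign of the error must be shown to be negative, so that the value stays strictly above $g_-(n)$ and the floor is still $g_-(n)$. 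So the hardest part is an explicit error bound of size below $1/164$, valid already from $n=2$ with small $n$ checked numerically, together with the sign determination.
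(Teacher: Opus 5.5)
Your opening choice $g_-(k)=B_k^3-B_{k-1}^3-m_k$, $g_+(k)=g_-(k)+1$ is exactly the paper's $D_k$, $D_k+1$. The ``obstacle'' you then raise is not real. Since $m_k$ is a function of $k$, $g_\pm$ are fixed sequences and the telescoping is legitimate. The $k$-dependence costs only this: the shift $m_k-3T_k/164=s_k/164$ (with $T_k=B_k-B_{k-1}$) varies with $k$, so the cleared-denominator inequalities involve the pair $(s_k,s_{k+1})$. The paper controls this by computing $3T_k \bmod 164$ (period $5$, $s_k\in\{77,149,161\}$, Lemma 3.2) and checking the five consecutive pairs in Lemma 3.3. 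A shift that is constant in $k$ for each fixed $n$, as in your middle paragraph, would also have worked.

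Your fallback is nevertheless a sound and genuinely different route. Telescoping with the constant shifts $0$ and $1/164$ gives
$G(n)<\bigl(\sum_{k\ge n}B_k^{-3}\bigr)^{-1}<G(n)+1/164$, where $G(n)=B_n^3-B_{n-1}^3-3T_n/164$. Since $3T_n/164\in\frac{1}{164}\mathbb{Z}$, we have $0\le\lceil 3T_n/164\rceil-3T_n/164\le 163/164$. So the floor is the stated value, with no residue computation at all.

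Concretely, the polynomial expansions in the proof of Lemma 3.3 are identities in arbitrary parameters $s,t$:
\begin{itemize}
\item[(i)] putting $s=t=0$ leaves $631512xy-105252y^2+79331>0$;
\item[(ii)] putting $s=t=163$ leaves $1136520xy^2-195160y^3-631512xy+105252y^2+35406x+194838y-79330>0$.
\end{itemize}
Both follow at once from $x>5y\ge 5$. Your route thus trades the mod-$164$ periodicity and the five-case table for a tighter upper inequality. In return it gives the sharper fact that the reciprocal exceeds $G(n)$ by less than $1/164$. Your heuristic derivation of $c=3/164$ from the $\lambda^{-5k}$ coefficient is also correct.

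Several slips need fixing.
\begin{itemize}
\item The error $\bigl(\sum_{k\ge n}B_k^{-3}\bigr)^{-1}-G(n)$ is \emph{positive}, not negative: about $0.112\,\lambda^{-n}$, roughly $0.0033$ at $n=2$. This positivity is just the $\delta=0$ lower bound.
\item For a positive error, the binding constraint is the distance from $3T_n/164$ to the integer \emph{below} (its fractional part), not above. Your target $1/164$ happens to cover both.
\item The integer case is vacuous, because $T_n=P_{2n-1}$ is odd.
\item ``Any $\delta\in(0,1]$'' is too strong. The upper shift must beat that positive remainder already at $k=2$, and $1/164$ does so only by a factor of about $2$.
\end{itemize}
Finally, the explicit polynomial verification is the actual content of the proof, and your proposal leaves it undone.
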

In contrast to the result of Xu and Wang~\cite{Xu2013}, our formula is the same for both even and odd values of $n$. This is because the smaller Binet root of the balancing sequence is positive, namely $\beta=3-2\sqrt{2}$. For the Pell sequence, the corresponding smaller root is negative, which leads to the parity distinction in the result of Xu and Wang~\cite{Xu2013}. In our proof, the irrational correction term can be expressed in the rational form appearing in Theorem~\ref{1.1}. Hence, we obtain an elementary proof similar to the approach of Xu and Wang~\cite{Xu2013}.

\section{Number-theoretic background and consequences} \label{sec:2}
In this section, we collect several identities and auxiliary results for balancing and Lucas-balancing numbers that will be used in the proof of the main theorem.

\begin{lemma}\label{2.1}
	\normalfont	For every integer $n\geq 1$, we have
	\begin{equation*}
		 C_n=B_{n+1}-3B_n.
	\end{equation*}
	Consequently, $2B_{n-1}+C_{n-1}=B_n-B_{n-1}$.
\end{lemma}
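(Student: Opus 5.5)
The lemma concerns the identity $C_n=B_{n+1}-3B_n$ for $n\ge 1$ and its consequence $2B_{n-1}+C_{n-1}=B_n-B_{n-1}$. I will prove the first identity by strong induction on $n$, using only that both sequences satisfy the recurrence $x_n=6x_{n-1}-x_{n-2}$. A direct Binet computation would also work, as a cross-check.

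\emph{Base cases.} Since $B_2=6B_1-B_0=6$, we get $B_2-3B_1=3=C_1$. Since $B_3=6B_2-B_1=35$, we get $B_3-3B_2=17$, and $C_2=6C_1-C_0=17$. The same check also works at $n=0$: $B_1-3B_0=1=C_0$. This extends the identity to all $n\ge 0$, which is needed for the consequence when $n=1$.

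\emph{Inductive step.} The sequence $D_n:=B_{n+1}-3B_n$ is a linear combination of shifts of $\{B_n\}$. It therefore satisfies $D_n=6D_{n-1}-D_{n-2}$ for $n\ge 2$. So $\{D_n\}$ and $\{C_n\}$ obey the same second-order recurrence and agree at two consecutive indices, hence they coincide. For the Binet cross-check, write
\begin{equation*}
B_{n+1}-3B_n=\frac{\lambda^n(\lambda-3)-\mu^n(\mu-3)}{\lambda-\mu}.
\end{equation*}
Since $\lambda-3=2\sqrt2$, $\mu-3=-2\sqrt2$ and $\lambda-\mu=4\sqrt2$, this equals $(\lambda^n+\mu^n)/2=C_n$.

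\emph{Consequence.} Apply the identity with index $n-1\ge 0$ to get $C_{n-1}=B_n-3B_{n-1}$. Then $2B_{n-1}+C_{n-1}=B_n-B_{n-1}$.

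The only subtle point is the index range. The statement says $n\ge1$, but the consequence at $n=1$ uses the identity at $0$, so the base case $n=0$ should be included explicitly. There is no real obstacle beyond this.
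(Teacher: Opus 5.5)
Your proof is correct and follows the paper's argument. The paper also sets $D_n=B_{n+1}-3B_n$, shows $D_{n+2}=6D_{n+1}-D_n$, and matches $D_0=C_0$, $D_1=C_1$; it therefore also covers $n=0$, which is exactly the index subtlety you flagged for the consequence at $n=1$. Your Binet computation is a valid independent check but is not needed.
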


\begin{proof}
	Let $D_n=B_{n+1}-3B_n$. Since $B_{n+2}=6B_{n+1}-B_n$, we have
	\begin{align*}
		D_{n+2}
		& =B_{n+3}-3B_{n+2} \\
		& =6B_{n+2}-B_{n+1}-3B_{n+2} \\
		& =6D_{n+1}-D_n.
	\end{align*}
Thus, the sequence $\{D_n\}$ satisfies the same recurrence relation as the sequence $\{C_n\}$. Since $D_0=C_0$ and $D_1=C_1$, it follows that
	$D_n=C_n$ for all $n$. Hence, by uniqueness, $C_n=B_{n+1}-3B_n$. 
Replacing $n$ by $n-1$ in the identity $C_n=B_{n+1}-3B_n$, we get 
$C_{n-1}=B_n-3B_{n-1}$, and hence  $2B_{n-1}+C_{n-1}=B_n-B_{n-1}$.
\end{proof}
 
\begin{lemma} \label{2.2}
\normalfont For all $n\geq 1$, we have
\begin{equation*}
B_n^2-6B_nB_{n-1}+B_{n-1}^2=1.
\end{equation*}
\end{lemma}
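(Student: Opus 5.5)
The plan is to show that the quadratic form $Q(n):=B_n^2-6B_nB_{n-1}+B_{n-1}^2$ is constant in $n$, and then to evaluate it once. Write $Q(n)=B_n^2-6B_nB_{n-1}+B_{n-1}^2$ for $n\ge1$. At $n=1$ we have $B_1=1$ and $B_0=0$, so $Q(1)=1$. For the inductive step I will use the recurrence in the form $B_{n+1}=6B_n-B_{n-1}$, i.e. $B_{n+1}-6B_n=-B_{n-1}$, and substitute it into
\begin{equation*}
Q(n+1)=B_{n+1}^2-6B_{n+1}B_n+B_n^2=B_{n+1}(B_{n+1}-6B_n)+B_n^2.
\end{equation*}
This gives $Q(n+1)=-B_{n+1}B_{n-1}+B_n^2$. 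Substituting the recurrence once more,
\begin{equation*}
-B_{n+1}B_{n-1}=-(6B_n-B_{n-1})B_{n-1}=-6B_nB_{n-1}+B_{n-1}^2.
\end{equation*}
Hence $Q(n+1)=B_n^2-6B_nB_{n-1}+B_{n-1}^2=Q(n)$, and induction gives $Q(n)=1$ for all $n\ge1$.

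As a cross-check, the same identity follows from the Binet formulas. Since $\lambda\mu=1$ and $\lambda+\mu=6$, the form $x^2-6xy+y^2$ is the norm form attached to the characteristic polynomial $t^2-6t+1$. With $\lambda-\mu=4\sqrt2$, a direct expansion gives $(\lambda-\mu)^2Q(n)=32$. This is consistent with the value $1$, because $(4\sqrt2)^2=32$.

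There is no real obstacle here. The only point needing care is applying the recurrence in the correct direction, which I have done above; no earlier lemma is required.
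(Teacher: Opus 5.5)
Your proof is correct and is essentially the paper's argument: the paper also shows that the form $B_{n+1}^2-6B_{n+1}B_n+B_n^2$ is invariant under the recurrence, using the same substitution, and evaluates it at the initial pair $(B_1,B_0)=(1,0)$. The paper indexes the form from $n=0$ and shifts afterwards, which is only cosmetic; your Binet cross-check is also valid but not needed.
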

 
 \begin{proof}
Let $u_n=B_{n+1}^2-6B_{n+1}B_n+B_n^2$. Now using the recurrence relation $B_{n+2}=6B_{n+1}-B_n$, we obtain 
\begin{equation*}
\begin{aligned}
	u_{n+1}
	&=B_{n+2}^2-6B_{n+2}B_{n+1}+B_{n+1}^2\\
	&=B_n^2-6B_nB_{n+1}+B_{n+1}^2\\
	&=u_n.
\end{aligned}
\end{equation*}
Hence, $u_n$ is constant. Since $u_0=B_1^2-6B_1B_0+B_0^2=1$, we have $u_n=1$ for all $n\geq0$. Therefore $B_{n+1}^2-6B_{n+1}B_n+B_n^2 = 1$ and by replacing $n$ with $n-1$, we get the required result.
 \end{proof}
 An immediate consequence of Lemma~\ref{2.2} is the following exact two-sided bound for consecutive balancing numbers, which will be used repeatedly throughout the proof.

 \begin{corollary}\label{2.3}
 	\normalfont Let $x=B_k$ and $y=B_{k-1}$ for $k\geq1$.	Then we have $x=3y+\sqrt{8y^2+1}$ and Consequently, $x>(3+2\sqrt{2})y$ for all $y\geq1$, and  $x<6y$ for all $y\geq2$.
 \end{corollary}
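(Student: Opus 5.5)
The plan is to derive Corollary~\ref{2.3} directly from the identity in Lemma~\ref{2.2}, viewing it as a quadratic equation in $x=B_k$ with $y=B_{k-1}$ fixed.

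First, substituting $n=k$ into Lemma~\ref{2.2} gives $x^2-6xy+y^2-1=0$. Solving for $x$ yields
\begin{equation*}
x=3y\pm\sqrt{9y^2-y^2+1}=3y\pm\sqrt{8y^2+1}.
\end{equation*}
To select the sign, I will show that $x\ge 3y$. Since $B_k=6B_{k-1}-B_{k-2}$ for $k\ge2$, we get $B_k-3B_{k-1}=3B_{k-1}-B_{k-2}$. This quantity is nonnegative because the sequence is increasing and $B_{k-1}\ge B_{k-2}\ge0$. Alternatively, Lemma~\ref{2.1} gives $B_k-3B_{k-1}=C_{k-1}>0$ directly. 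For $k=1$ we check directly: $x=1$, $y=0$, and $3\cdot0+\sqrt{1}=1$. The minus sign would give $x<3y$ whenever $8y^2+1>0$ and $y\ge1$, or $x=-1$ when $y=0$. In every case this contradicts $x\ge 3y$ (or $x\ge0$), so $x=3y+\sqrt{8y^2+1}$. This agrees with Lemma~\ref{2.1}, since $C_{k-1}=\sqrt{8B_{k-1}^2+1}$.

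The two inequalities then follow by elementary estimates of the square root. Because $8y^2+1>8y^2$, we have $\sqrt{8y^2+1}>2\sqrt2\,y$ for $y\ge1$ (indeed for all $y\ge0$), which gives $x>(3+2\sqrt2)y$. For the upper bound, $\sqrt{8y^2+1}<3y$ is equivalent to $8y^2+1<9y^2$, that is, $y^2>1$. This holds for $y\ge2$ and gives $x<6y$. At $y=1$ it fails, since $B_2=6=6B_1$, which explains the restriction $y\ge2$.

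The only step that needs care is the choice of sign, that is, establishing $x\ge 3y$ from monotonicity or from Lemma~\ref{2.1}. The remaining steps are routine algebra.
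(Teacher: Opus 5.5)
Your proposal is correct and follows the paper's proof: you solve $x^2-6xy+y^2=1$ as a quadratic in $x$, take the plus root, and then compare $\sqrt{8y^2+1}$ with $2\sqrt{2}\,y$ and with $3y$. The only difference is in excluding the minus root: the paper uses $x>y$, while you use the stronger fact $x\ge 3y$ (from Lemma~\ref{2.1} or from monotonicity), and either one suffices.
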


 \begin{proof}
 From Lemma~\ref{2.2}, we obtain $x^2-6xy+y^2=1$. Now solving this quadratic equation for $x$ and using $x>y$, we take the positive root to obtain $x=3y+\sqrt{8y^2+1}$. As $\sqrt{8y^2+1}>2\sqrt{2}y$ for $y\geq1$, it follows that  $x>(3+2\sqrt{2})y$. Moreover, for $y\geq2$, we have $1<y^2$ and hence $8y^2+1<9y^2$. Therefore, we get $\sqrt{8y^2+1}<3y$, which gives
 $x<6y$.
 \end{proof}
 
We use the same Binet formula method as in \cite[Theorem 11]{Gozeri2018} to obtain the following relations between Pell and balancing numbers.

\begin{lemma}\label{2.4}
\normalfont	Let $P_n$ and $Q_n$ denote the Pell and Pell--Lucas numbers, respectively. Then, for all $n\geq1$, we have
\begin{equation*}
	P_{2n}=2B_n, \,\, P_{2n-1}=2B_{n-1}+C_{n-1} \,\, \text{and} \,\,	Q_{2n}=2C_n.
\end{equation*}
\end{lemma}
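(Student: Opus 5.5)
We prove Lemma~\ref{2.4} with the Binet formulas, as in \cite[Theorem 11]{Gozeri2018}. Recall that the Pell and Pell--Lucas numbers are
\begin{equation*}
P_n=\frac{\alpha^n-\beta^n}{\alpha-\beta}, \qquad Q_n=\alpha^n+\beta^n,
\end{equation*}
where $\alpha=1+\sqrt{2}$ and $\beta=1-\sqrt{2}$, so that $\alpha-\beta=2\sqrt{2}$. The key observation is that $\alpha^2=3+2\sqrt{2}=\lambda$ and $\beta^2=3-2\sqrt{2}=\mu$. Moreover, $\lambda-\mu=4\sqrt{2}=2(\alpha-\beta)$. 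Every identity should therefore follow by substituting $\alpha^{2n}=\lambda^n$ and $\beta^{2n}=\mu^n$.

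\emph{The even-index identities.} For the first identity we compute
\begin{equation*}
P_{2n}=\frac{\lambda^n-\mu^n}{2\sqrt{2}}=2\cdot\frac{\lambda^n-\mu^n}{4\sqrt{2}}=2B_n.
\end{equation*}
Similarly,
\begin{equation*}
Q_{2n}=\lambda^n+\mu^n=2C_n.
\end{equation*}

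\emph{The odd-index identity.} This step needs a little more care, so we give two routes.

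The first route uses the Pell addition formula
\begin{equation*}
P_{m+1}=P_m+P_{m-1}+P_{m-1},
\end{equation*}
which is just $P_{m+1}=2P_m+P_{m-1}$. Applying it with $m=2n-1$ gives $P_{2n}=2P_{2n-1}+P_{2n-2}$, and hence
\begin{equation*}
P_{2n-1}=\frac{P_{2n}-P_{2n-2}}{2}=B_n-B_{n-1}.
\end{equation*}
By Lemma~\ref{2.1}, $B_n-B_{n-1}=2B_{n-1}+C_{n-1}$, which is the claim.

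The second route works directly with the Binet formulas. Write
\begin{equation*}
P_{2n-1}=\frac{\alpha^{-1}\lambda^n-\beta^{-1}\mu^n}{2\sqrt{2}}.
\end{equation*}
Since $\alpha\beta=-1$, we have $\alpha^{-1}=-\beta=\sqrt{2}-1$ and $\beta^{-1}=-\alpha=-(1+\sqrt{2})$. Expanding then gives
\begin{equation*}
P_{2n-1}=\frac{\lambda^n-\mu^n}{2\sqrt{2}}\cdot\frac{1}{\sqrt{2}}\cdot\sqrt{2}\;\cdots
\end{equation*}
This is messier, so we prefer the first route, which uses only the even case and Lemma~\ref{2.1}.

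The only potential obstacle is the index range. For $n=1$ we need $P_0=0=2B_0$, and the even identity indeed holds for $n=0$ as well. Lemma~\ref{2.1} is used with $n-1\ge 0$. Its identity $C_m=B_{m+1}-3B_m$ also holds at $m=0$, since $C_0=1=B_1-3B_0$. So the argument covers every $n\ge1$.
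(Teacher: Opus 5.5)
Your proof is correct. The even-index identities are handled exactly as in the paper, by substituting $(1+\sqrt{2})^{2n}=\lambda^n$ and $(1-\sqrt{2})^{2n}=\mu^n$ into the Binet formulas. For $P_{2n-1}$, however, you take a different route.

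\textbf{The paper's route.} It computes $P_{2n-1}$ directly from the Binet formula. It writes $(1+\sqrt{2})^{2n-1}=(1+\sqrt{2})\lambda^{n-1}$ and $(1-\sqrt{2})^{2n-1}=(1-\sqrt{2})\mu^{n-1}$, then regroups the result as $(4\sqrt{2}B_{n-1}+2\sqrt{2}C_{n-1})/(2\sqrt{2})$. This needs nothing beyond the Binet formulas, so it is independent of Lemma~\ref{2.1}. The paper then obtains \eqref{e1} as a consequence of Lemma~\ref{2.4} together with Lemma~\ref{2.1}.

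\textbf{Your route.} You combine the Pell recurrence $P_{2n}=2P_{2n-1}+P_{2n-2}$ with the even case to get $P_{2n-1}=B_n-B_{n-1}$, and only then convert via Lemma~\ref{2.1}. In effect you prove \eqref{e1} first and deduce the lemma from it, which reverses the paper's order. Your argument stays within integer recurrences and avoids the irrational bookkeeping. It also yields directly the form $B_n-B_{n-1}$ that the later sections actually use, namely Lemma~\ref{3.2} and the definition of $E_k$. The cost is a dependence on Lemma~\ref{2.1} down to index $0$. You rightly verify this case ($C_0=1=B_1-3B_0$), since that lemma is stated only for $n\ge1$, although its proof covers $n\ge0$.

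\textbf{Two slips to clean up.} First, the displayed ``addition formula'' $P_{m+1}=P_m+P_{m-1}+P_{m-1}$ is false: its right side is $P_m+2P_{m-1}$, which would give $P_3=4$ instead of $5$. The recurrence you actually apply, $P_{m+1}=2P_m+P_{m-1}$, is the correct one, so the argument is unaffected. Second, the abandoned second route ends in a meaningless display and should be deleted. If carried through, it gives $P_{2n-1}=C_n-2B_n$. This equals $B_n-B_{n-1}$, because Lemma~\ref{2.1} together with $B_{n+1}=6B_n-B_{n-1}$ gives $C_n=3B_n-B_{n-1}$.
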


\begin{proof}
Let $\alpha_P=1+\sqrt{2}$ and $\beta_P=1-\sqrt{2}$. Since $\alpha_P^2=3+2\sqrt{2}=\alpha$ and
$\beta_P^2=3-2\sqrt{2}=\beta$, the Binet formulas give $P_{2n} \displaystyle
=\frac{\alpha_P^{2n}-\beta_P^{2n}}{2\sqrt{2}}
=\frac{\alpha^n-\beta^n}{2\sqrt{2}}
=2B_n$ and $Q_{2n}
=\alpha_P^{2n}+\beta_P^{2n}
=\alpha^n+\beta^n
=2C_n$. Similarly, for the middle identity, we have
\begin{align*}
	P_{2n-1}
	&=\frac{\alpha_P\alpha^{n-1}-\beta_P\beta^{n-1}}{2\sqrt{2}}\\
	&=\frac{(1+\sqrt{2})\alpha^{n-1}-(1-\sqrt{2})\beta^{n-1}}
	{2\sqrt{2}}\\
	&=\frac{\alpha^{n-1}-\beta^{n-1}
		+\sqrt{2}\left(\alpha^{n-1}+\beta^{n-1}\right)}
	{2\sqrt{2}}\\
	&=\frac{4\sqrt{2}\,B_{n-1}+2\sqrt{2}\,C_{n-1}}
	{2\sqrt{2}}\\
	&=2B_{n-1}+C_{n-1}.
\end{align*}
\end{proof}
Now, using the second identity in Lemma~\ref{2.4}  together with Lemma~\ref{2.1}, we obtain
\begin{equation}\label{e1}
P_{2k-1}=2B_{k-1}+C_{k-1}
=2B_{k-1}+B_k-3B_{k-1}
=B_k-B_{k-1}.
\end{equation}

\section{Auxiliary Results}
We first establish a few auxiliary results that will be used in the proof of the main theorem. The first result gives a rational form of the correction term. This allows us to avoid studying the distribution of an irrational multiple.

\begin{lemma}\label{3.1}
\normalfont For every integer $n\geq 2$,
\begin{equation*}
\left\lceil \frac{3(1+\sqrt{2})}{82}B_{n-1}\right\rceil
=
\left\lceil \frac{3(2B_{n-1}+C_{n-1})}{164}\right\rceil.
\end{equation*}
\end{lemma}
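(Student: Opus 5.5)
Write $B=B_{n-1}$ and $C=C_{n-1}$. Since $n\geq 2$ we have $B\geq 1$ and $C\geq 3$. Set
\begin{equation*}
x=\frac{3(1+\sqrt{2})}{82}B=\frac{3(2B+2\sqrt{2}B)}{164}, \qquad x'=\frac{3(2B+C)}{164}.
\end{equation*}
The plan is to show that $x<x'$ and that no integer lies strictly between them. Since $B\geq1$, $x$ is irrational and hence not an integer. So if $m$ is an integer with $x<m\leq x'$, we get $\lceil x\rceil=m=\lceil x'\rceil$. Otherwise no integer lies in $(x,x']$, and again $\lceil x\rceil=\lceil x'\rceil$. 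It therefore suffices to rule out an integer $m$ with $x<m<x'$.

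First, $x<x'$. By definition of the Lucas-balancing numbers, $C^2=8B^2+1$, so $C>2\sqrt{2}B$.

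Now suppose $m$ is an integer with $x<m<x'$, and put $t=164m-6B\in\mathbb{Z}$. Multiplying the inequalities by $164$ and subtracting $6B$ gives
\begin{equation*}
6\sqrt{2}\,B<t<3C.
\end{equation*}
Both sides are positive, so squaring gives $72B^2<t^2<9C^2$. Using $72B^2=9(C^2-1)$, this becomes
\begin{equation*}
9C^2-9<t^2<9C^2, \qquad\text{that is,}\qquad 0<(3C-t)(3C+t)<9.
\end{equation*}
Both factors are positive integers. However, $3C+t>3C\geq 9$, so the product is at least $9$ whenever $3C-t\geq1$. This is a contradiction, so no such $m$ exists.

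The only point needing care is the boundary case where $x'$ is itself an integer. This is exactly why the argument is set up with the half-open interval $(x,x']$ and the strict inequality $m<x'$. In that case $t=3C$ is allowed and is harmless, since then $\lceil x\rceil=x'=\lceil x'\rceil$. Beyond this, I expect no real obstacle: the factorization trick removes any need to study the distribution of $\frac{3(1+\sqrt2)}{82}B_{n-1}$ modulo $1$. Lemma~\ref{2.1} is needed only if one wishes to rewrite $x'$ as $3(B_n-B_{n-1})/164$.
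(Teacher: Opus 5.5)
Your proof is correct, and it takes a different, more elementary route than the paper. The paper writes $B_{n-1}=P_{2n-2}/2$ and uses the Pell Binet formula to obtain the exact identity
\[
\frac{3(1+\sqrt{2})}{82}B_{n-1}=\frac{3P_{2n-1}}{164}-\delta_n,\qquad \delta_n=\frac{3(\sqrt{2}-1)^{2n-2}}{164}\in\Bigl(0,\frac{1}{164}\Bigr).
\]
It then argues that $3P_{2n-1}/164$ has fractional part $0$ or at least $1/164$, so subtracting $\delta_n$ cannot move the ceiling. Finally it replaces $P_{2n-1}$ by $2B_{n-1}+C_{n-1}$ via Lemma~\ref{2.4}.

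You use nothing but $C^2=8B^2+1$ and integrality. Your factorization $(3C-t)(3C+t)<9$ is a disguised form of two facts: $164x'=6B+3C\in\mathbb{Z}$, and $0<164(x'-x)=3(C-2\sqrt{2}B)=\frac{3}{C+2\sqrt{2}B}<1$. This gap is exactly $164\delta_n=3(3-2\sqrt{2})^{n-1}$ in the paper's notation. So both proofs rest on the same mechanism: after scaling by $164$, the right endpoint is an integer and the gap is smaller than $1$. The paper computes the gap exactly and shows it is exponentially small. You only bound it, which is all that is needed, and this makes your argument independent of the Pell-number relations.

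You could state your argument in that direct form: the interval $(164x,164x')$ has an integer right endpoint and length less than $1$, so it contains no integer at all, in particular no $164m$.

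One small point of presentation. Your sentence ``if $m$ is an integer with $x<m\le x'$, then $\lceil x\rceil=m=\lceil x'\rceil$'' is valid only once you know $m=x'$, which your next step supplies. The case analysis therefore reads better placed after the no-integer claim is proved.
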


\begin{proof}
Let $\alpha_P=1+\sqrt{2}$ and $\beta_P=1-\sqrt{2}$. Since $  \alpha_P^2=\alpha$ and $\beta_P^2=\beta$, we have $ \displaystyle B_{n-1}=\frac{P_{2n-2}}{2}$. Hence $ \displaystyle \frac{3(1+\sqrt{2})}{82}B_{n-1} = \frac{3\alpha_P P_{2n-2}}{164}$. Now,  using the Binet formula for $P_{2n-2}$, we obtain 
\begin{equation*}
	\begin{aligned}
		\frac{3(1+\sqrt{2})}{82}B_{n-1}
		&=\frac{3\alpha_P}{164}
		\frac{\alpha_P^{2n-2}-\beta_P^{2n-2}}{2\sqrt{2}}\\
		&=\frac{3}{164}
		\left(
		\frac{\alpha_P^{2n-1}-\beta_P^{2n-1}}{2\sqrt{2}}
		-(\sqrt{2}-1)^{2n-2}
		\right)\\
		&=\frac{3P_{2n-1}}{164}
		-\frac{3(\sqrt{2}-1)^{2n-2}}{164}.
	\end{aligned}
\end{equation*}

Set $ \displaystyle \delta_n=\frac{3(\sqrt{2}-1)^{2n-2}}{164}$, and since $0<\sqrt{2}-1<1$, for $n\geq2$ we have 
\begin{equation*}
 	0<\delta_n\leq\frac{3(\sqrt{2}-1)^2}{164}
 	=\frac{3(3-2\sqrt{2})}{164}
 	<\frac{1}{164}.
 \end{equation*}
Since $P_{2n-1}$ is an integer, the fractional part of
$3P_{2n-1}/164$, when it is nonzero, is at least $1/164$.
Therefore, subtracting $\delta_n$, where
$0<\delta_n<1/164$, does not change the ceiling. Thus, we obtain
\begin{equation*}
\left\lceil
\frac{3(1+\sqrt{2})}{82}B_{n-1}
\right\rceil
=
\left\lceil
\frac{3P_{2n-1}}{164}
\right\rceil.
\end{equation*}
Finally, by Lemma~\ref{2.4}, we get $P_{2n-1}=2B_{n-1}+C_{n-1}$. Hence 
\begin{equation*}
	\left\lceil
	\frac{3(1+\sqrt{2})}{82}B_{n-1}
	\right\rceil
	=
	\left\lceil
	\frac{3(2B_{n-1}+C_{n-1})}{164}
	\right\rceil.
\end{equation*}
\end{proof}
The periodicity of balancing numbers modulo positive integers was studied by Panda and Rout \cite{PandaRout2014}. In our setting, we next determine the residues of the correction term modulo $164$. This allows us to write the ceiling term in an exact form that will be useful in the proof of the main theorem.

\begin{lemma}\label{3.2}
\normalfont Let $r_k$ be the least nonnegative residue defined by
\begin{equation*}
r_k\equiv 3(2B_{k-1}+C_{k-1})\pmod{164},
\qquad 0\leq r_k<164,	
\end{equation*}
and put $s_k=164-r_k$. Then $(r_k)_{k\geq2}$ is periodic with period $5$. Moreover,
\begin{equation*}
s_k=
\begin{cases}
	149, & k\equiv 2,4\pmod{5},\\
	77, & k\equiv 3\pmod{5},\\
	161, & k\equiv 0,1\pmod{5}.
\end{cases}
\end{equation*}
\end{lemma}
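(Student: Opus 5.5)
The plan is to reduce everything to the integer sequence $a_k:=2B_{k-1}+C_{k-1}$, which by Lemma~\ref{2.1} equals $B_k-B_{k-1}$. The first step is to note that $a_k$ satisfies the balancing recurrence
\[
a_{k}=6a_{k-1}-a_{k-2}\qquad (k\geq 3).
\]
This holds because $a_k$ is the difference of two shifts of $\{B_n\}$, and that recurrence is linear. Since $r_k\equiv 3a_k \pmod{164}$ and $\gcd(3,164)=1$, the residue $r_k$ is determined by $a_k \bmod 164$. Consequently, periodicity of $(a_k \bmod 164)_{k\geq 2}$ with period $5$ implies the same for $(r_k)$.

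For the periodicity, I will use the standard fact that a sequence satisfying a second-order recurrence modulo $m$ is determined by any two consecutive terms. Hence it suffices to show
\[
(a_7,a_8)\equiv(a_2,a_3)\pmod{164}.
\]
Induction on $k$ using the recurrence then gives $a_{k+5}\equiv a_k \pmod{164}$ for all $k\geq 2$.

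I compute the residues directly from $B_1=1$, $B_2=6$, $B_3=35$, $B_4=204$, $B_5=1189$ and $B_6=6930$:
\begin{align*}
a_2 &= 5, & a_3 &= 29, & a_4 &= 169\equiv 5,\\
a_5 &= 985\equiv 1, & a_6 &= 5741\equiv 1 \pmod{164}.
\end{align*}
Continuing the recurrence modulo $164$ gives $a_7\equiv 6\cdot 1-1=5$ and $a_8\equiv 6\cdot 5-1=29$. This matches $(a_2,a_3)$, so the period is $5$.

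Multiplying the residues $5,29,5,1,1$ by $3$ and reducing modulo $164$ gives, for $k=2,3,4,5,6$,
\[
r_k = 15,\ 87,\ 15,\ 3,\ 3 .
\]
Hence $s_k=164-r_k$ takes the values $149,77,149,161,161$. These match the stated cases: $k\equiv 2,4$, $k\equiv 3$, and $k\equiv 0,1 \pmod 5$.

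There is no real obstacle here; the only delicate point is the arithmetic of the reductions modulo $164$. I will double-check two of them explicitly: $985=6\cdot 164+1$ and $5741=35\cdot 164+1$.
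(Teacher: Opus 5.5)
Your proposal is correct and follows essentially the same route as the paper: rewrite $2B_{k-1}+C_{k-1}=B_k-B_{k-1}$ via Lemma~\ref{2.1}, note that this sequence satisfies the balancing recurrence, compute its residues $5,29,5,1,1 \pmod{164}$ for $k=2,\dots,6$, check that the next two terms return to $(5,29)$, and multiply by $3$. One small point: the appeal to $\gcd(3,164)=1$ is unnecessary, since $3a_k \bmod 164$ depends only on $a_k \bmod 164$ in any case.
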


\begin{proof}
From Lemma~\ref{2.1}, we have $C_{k-1}=B_k-3B_{k-1}$. Therefore, $2B_{k-1}+C_{k-1}=B_k-B_{k-1}$. Let $T_k=B_k-B_{k-1}$. Then, using the recurrence for the balancing numbers, we obtain $T_{k+2}=6T_{k+1}-T_k$. The initial values are $T_2=5,\,T_3=29,\, T_4=169,\,T_5=985,\,T_6=5741$. Thus, modulo $164$, we have
\begin{equation*}
T_2,T_3,T_4,T_5,T_6 \equiv 5,29,5,1,1\pmod{164}.
\end{equation*}
Consequently, we obtain $T_7=6T_6-T_5\equiv5\pmod{164}$, and $T_8=6T_7-T_6\equiv29\pmod{164}$. Hence, $T_7\equiv T_2\pmod{164}$ and $T_8\equiv T_3\pmod{164}$. Since both $(T_k)$ and $(T_{k+5})$ satisfy the same recurrence modulo $164$, it follows that $T_{k+5}\equiv T_k\pmod{164}$ for every $k\geq2$. Thus $(T_k)$ is periodic modulo $164$ with period $5$. Since $r_k\equiv3T_k\pmod{164}$, we obtain $r_k\equiv15,87,15,3,3\pmod{164}$ for $k\equiv2,3,4,0,1\pmod5$, respectively. Therefore,
\begin{equation*}
s_k=164-r_k
=
\begin{cases}
	149, & k\equiv2,4\pmod5,\\
	77, & k\equiv3\pmod5,\\
	161, & k\equiv0,1\pmod5.
\end{cases}
\end{equation*}
\end{proof}
 For $k\ge2$, we define  $E_k:=164 \, (B_k^3-B_{k-1}^3)-3 \, (B_k-B_{k-1})-s_k$ and $D_k:=E_k/164$. Since $B_k-B_{k-1}=2B_{k-1}+C_{k-1}$, by Lemma~\ref{3.2}, $D_k\in\mathbb Z$. Next, the following inequalities will be used to bound the reciprocal tail in the proof of Theorem~\ref{1.1}.
 
 \begin{lemma}\label{3.3}
\normalfont	For every integer $k\ge2$, we have
\begin{itemize}
	\item[(i)] $ \displaystyle 164\,B_k^3\,(E_{k+1}-E_k) >E_k\,E_{k+1}$.	
	\item[(ii)] $\displaystyle	164\,B_k^3\,(E_{k+1}-E_k)<(E_k+164)\,(E_{k+1}+164)$.
\end{itemize}
\end{lemma}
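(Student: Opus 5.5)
The plan is to treat both inequalities as explicit polynomial inequalities in two consecutive balancing numbers. Write $x=B_k$, $y=B_{k-1}$ and $z=B_{k+1}=6x-y$. Then
\begin{equation*}
E_k=164(x^3-y^3)-3(x-y)-s_k,\qquad E_{k+1}=164(z^3-x^3)-3(z-x)-s_{k+1},
\end{equation*}
where $s_k,s_{k+1}\in\{77,149,161\}$ by Lemma~\ref{3.2}. I will also use $E_k+164=164(x^3-y^3)-3(x-y)+r_k$, where $r_k=164-s_k\in\{3,15,87\}$. After substituting $z=6x-y$, the two quantities
\begin{equation*}
F_1=164x^3(E_{k+1}-E_k)-E_kE_{k+1},\qquad F_2=(E_k+164)(E_{k+1}+164)-164x^3(E_{k+1}-E_k)
\end{equation*}
become polynomials in $x,y$ with coefficients depending on $s_k,s_{k+1}$. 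Each polynomial will be reduced repeatedly with the relation $x^2-6xy+y^2=1$ of Lemma~\ref{2.2}: any occurrence of $y^2$ is replaced by $6xy-x^2+1$, so that every term becomes of the form $x^a$ or $x^a y$.

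The heuristic behind this is the telescoping approximation $1/B_k^3\approx 1/a_k-1/a_{k+1}$ with $a_k\approx B_k^3-B_{k-1}^3-c(B_k-B_{k-1})$. The degree-$6$ terms of $F_1$ and $F_2$ should cancel identically. The linear correction $3(x-y)$, which comes from the constant $3(1+\sqrt2)/82$ and Lemma~\ref{3.1}, should then cancel the next order (degree $4$ after reduction). What survives should be a term of order $x^3$ (or $x^2y$) multiplied by $s_k+s_{k+1}$, or by a similar positive combination, for $F_1$, and by $r_k+r_{k+1}$ for $F_2$, plus lower-order terms. The sign of $F_1$ is then governed by $s_k,s_{k+1}>0$, and the sign of $F_2$ by $r_k,r_{k+1}>0$. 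I expect the minimal value $r=3$ to be the critical case for (ii), since it gives the smallest margin.

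To control the remaining lower-order terms, I would use Corollary~\ref{2.3}, which gives $(3+2\sqrt2)y<x<6y$ for $y\ge2$. With it every monomial $x^ay$ can be compared with $x^{a+1}$ up to constants, yielding an inequality of the form $F_i\ge c\,x^3-C\,x^2\cdot(\text{const})>0$. This holds once $x$ exceeds an explicit threshold. The finitely many small $k$ below that threshold, and the case $k=2$ where $y=1$ and the bound $x<6y$ fails, would be checked directly from $B_1,\dots,B_5$ and the table of $s_k$.

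The main obstacle is the exact cancellation bookkeeping in the degree-$5$ and degree-$4$ layers. One has to verify that after reduction by Lemma~\ref{2.2} these layers vanish, or at least have the right sign. If they do not cancel exactly, I would instead keep $y$ exact via $y=3x-\sqrt{8x^2+1}$ (Corollary~\ref{2.3} with indices shifted), expand $\sqrt{8x^2+1}=2\sqrt2\,x+O(1/x)$, and compare leading coefficients in $x$ directly. That would confirm that the leading surviving coefficient is positive in both (i) and (ii).
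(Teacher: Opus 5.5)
Your plan is essentially the paper's proof: expand both differences as polynomials in $x=B_k$, $y=B_{k-1}$ via $B_{k+1}=6x-y$, reduce with $x^2-6xy+y^2=1$ (the degree-$6$ and degree-$4$ layers do cancel after reduction, as you predict), bound what remains using $5y<x<6y$ from Corollary~\ref{2.3}, and check $k=2$ by hand. One correction to your heuristic: the dominant surviving term is exactly $164\,(s_kB_{k+1}^3-s_{k+1}B_{k-1}^3)$ for (i) and $164\,(r_kB_{k+1}^3-r_{k+1}B_{k-1}^3)$ for (ii), so $s_{k+1}$ and $r_{k+1}$ enter with a negative sign, and positivity comes from $B_{k+1}^3\gg B_{k-1}^3$ rather than from a positive combination such as $s_k+s_{k+1}$.
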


\begin{proof}
Let $x=B_k$, $y=B_{k-1}$, $s=s_k$ and $t=s_{k+1}$. From Corollary~\ref{2.3}, we get
$x>(3+2\sqrt{2})\,y$ for $y\ge1$ and
$x<6y$ for $y\ge2$. Next, by Lemma~\ref{3.2}, $s_k$ has period $5$. Since $t=s_{k+1}$, the possible consecutive pairs $(s,t)$ are $(149,77)\,,(77,149)\,,(149,161)\,,(161,161)\,,(161,149)$, with $77\le s,t\le161$. Now, by Lemma~\ref{2.2} and the recurrence relation, we have $x^2-6xy+y^2=1$ and
$B_{k+1}=6x-y$. Therefore, we get
\begin{align*}
164x^3(E_{k+1}-E_k)-E_kE_{k+1} & =1136520sxy^2-194996sy^3+35409sx+194835sy\\
	& -164ty^3-3tx+3ty+631512xy\\
	& -105252y^2-st+79331.
\end{align*}
For $k\ge3$, we have $y=B_{k-1}\ge2$. Hence, by Corollary~\ref{2.3}, we get $(3+2\sqrt{2})y<x<6y$, and in particular, $x>5y$. Hence
\begin{align*}
	&1136520\,s\,x\,y^2-194996\,s\,y^3-164\,t\,y^3\\
	&\quad>
	\left[77(5\cdot1136520-194996)-164(161)\right]y^3\\
	&\quad=422519104y^3,
\end{align*}	
and $631512\,x\,y-105252\,y^2>3052308y^2$. Moreover, since $t\le161$ and $x<6y$, we have $-3\,t\,x>-3\,(161)\,(6y)=-2898y$. Also, since $s,t\le161$, $-st\ge-161^2=-25921$. Therefore,
\begin{align*}
	&164\,x^3 \,(E_{k+1}-E_k)-E_k\,E_{k+1}\\
	&\quad>422519104\,y^3+3052308\,y^2-2898\,y+53410>0.
\end{align*}
Thus, (i) holds for $k\ge3$. For $k=2$, direct calculation gives  $164\,B_2^3\,(E_3-E_2)-E_2\,E_3=1051418432>0$. Hence (i) is true for $k\ge2$.

Next, for (ii), consider the expansion 
\begin{align*} 
	&(E_k+164)(E_{k+1}+164)-164x^3(E_{k+1}-E_k)\\ 
	={}&(186389280-1136520s)xy^2 +(-32006240+194996s+164t)y^3\\ 
	&-631512xy+105252y^2 +(5806584-35409s+3t)x\\ 
	&+(31953432-194835s-3t)y +st-164s-164t-52435. 
\end{align*}
For each of the possible pairs $(s,t)$ above, direct calculation gives
\begin{equation*}
\begin{array}{|c|c|}
	\hline
	(s,t) &
	5(186389280-1136520s)
	+(-32006240+194996s+164t)\\
	\hline
	(149,77) & 82299792\\
	(77,149) & 477419088\\
	(149,161) & 82313568\\
	(161,161) & 16462320\\
	(161,149) & 16460352 \\
	\hline
\end{array}	
\end{equation*}
Thus, the minimum of these values is $16460352>0$. Since $x>5y$ by Corollary~\ref{2.3},  it follows that
\begin{equation*}
(186389280-1136520s)xy^2 +(-32006240+194996s+164t)y^3 >16460352y^3.
\end{equation*}
Moreover, since $x<6y$ by Corollary~\ref{2.3}, we obtain $-631512xy+105252y^2>-3683820y^2$. Finally, for the remaining terms, for each of the possible pairs $(s,t)$ above, we have $5806584-35409s+3t>0$ and $31953432-194835s-3t>0$. Also, $st-164s-164t-52435\ge -79322$. Therefore, for $k\ge3$, we have 
\begin{align*}
&(E_k+164)(E_{k+1}+164) - 164x^3(E_{k+1}-E_k)\\ &\quad>16460352y^3-3683820y^2-79322.	
\end{align*}
Since $y=B_{k-1} \geq B_2 \geq 6$, the expression on the right is positive. For $k=2$, direct calculation gives $(E_2+164)(E_3+164)-164B_2^3(E_3-E_2) = 101693776>0$. Therefore (ii) is valid for all $k\ge2$.
\end{proof}

\section{Proof of the Main Theorem}

\subsection*{Proof of Theorem \ref{1.1}:} 
\begin{proof}
If we divide (i) by $164^2\,x^3$, we obtain $\frac{E_{k+1}-E_k}{164} > \frac{E_kE_{k+1}}{164^2x^3}$. Now, using $D_k=E_k/164$, this becomes $D_{k+1}-D_k>\frac{D_kD_{k+1}}{x^3}$. Hence 
\begin{equation}\label{4.1}
\frac{1}{x^3} = \frac{1}{B_k^3}
<\frac{1}{D_k}-\frac{1}{D_{k+1}} 
\end{equation}
Similarly, dividing (ii) by $164^2x^3$ gives $\frac{E_{k+1}-E_k}{164} < \frac{(E_k+164)\,(E_{k+1}+164)}{164^2x^3}$. Again, using $D_k=E_k/164$, we obtain $D_{k+1}-D_k < \frac{(D_{k}+1)(D_{k+1}+1)}{x^3}$. Therefore, we have
\begin{equation}\label{4.2}
 \frac{1}{D_{k}+1}-\frac{1}{D_{k+1}+1} < \, \frac{1}{B_k^3} = \frac{1}{x^3}
\end{equation}
Combining \eqref{4.1} and \eqref{4.2}, we obtain
\begin{equation*}
\frac{1}{D_k+1}-\frac{1}{D_{k+1}+1} < \frac{1}{B_k^3} < \frac{1}{D_k}-\frac{1}{D_{k+1}}.	
\end{equation*}
Now, adding the above inequalities for $k=n,n+1,\ldots,m$, we obtain
\begin{equation*}
\sum_{k=n}^{m}
\left(
\frac{1}{D_k+1}-\frac{1}{D_{k+1}+1}
\right)
<
\sum_{k=n}^{m}\frac{1}{B_k^3}
<
\sum_{k=n}^{m}
\left(
\frac{1}{D_k}-\frac{1}{D_{k+1}}
\right).	
\end{equation*}
Hence
\begin{equation*}
\frac{1}{D_n+1}-\frac{1}{D_{m+1}+1}
<
\sum_{k=n}^{m}\frac{1}{B_k^3}
<
\frac{1}{D_n}-\frac{1}{D_{m+1}}.
\end{equation*}
Since $D_m\to\infty$ as $m\to\infty$,  letting $m\to\infty$ gives
\begin{equation*}
\frac{1}{D_n+1} < \sum_{k=n}^{\infty}\frac{1}{B_k^3} < \frac{1}{D_n}.
\end{equation*}
As a result, we have
\begin{equation*}
D_n<
\left(\sum_{k=n}^{\infty}\frac{1}{B_k^3}\right)^{-1}
<D_n+1.	
\end{equation*}
Since $D_n\in\mathbb{Z}$, it follows that $\left\lfloor \left(\sum_{k=n}^{\infty}\frac{1}{B_k^3}\right)^{-1} \right\rfloor=D_n.$ Now, using the definition of $D_n$, we get 
\begin{equation} \label{4.3}
D_n = B_n^3-B_{n-1}^3 -\frac{3(B_n-B_{n-1})+s_n}{164}.	
\end{equation}
By Lemma~\ref{3.2}, we have $s_n=164-r_n$, where $0<r_n<164$.
Since $3(2B_{n-1}+C_{n-1})\equiv r_n\pmod{164}$, we have
\begin{equation*}
\frac{3(2B_{n-1}+C_{n-1})}{164}
=
q+\frac{r_n}{164}
\end{equation*}
for some integer $q$. Hence 
\begin{equation*}
	\left\lceil
	\frac{3(2B_{n-1}+C_{n-1})}{164}
	\right\rceil
	=q+1
	=
	\frac{3(2B_{n-1}+C_{n-1})+164-r_n}{164}
\end{equation*}
and 
\begin{equation*}
\frac{3(2B_{n-1}+C_{n-1})+s_n}{164} = \left\lceil \frac{3(2B_{n-1}+C_{n-1})}{164}\right\rceil.
\end{equation*}
Therefore form equation~\eqref{4.3} and the $B_n-B_{n-1}=2B_{n-1}+C_{n-1}$, we get
\begin{equation*}
D_n = B_n^3-B_{n-1}^3 -\left\lceil \frac{3(2B_{n-1}+C_{n-1})}{164}
\right\rceil. 
\end{equation*}
Since $\left\lfloor
\left(\sum_{k=n}^{\infty}\frac{1}{B_k^3}\right)^{-1}
\right\rfloor=D_n$, we obtain
\begin{equation*}
\left\lfloor
\left(\sum_{k=n}^{\infty}\frac{1}{B_k^3}\right)^{-1}
\right\rfloor
=
B_n^3-B_{n-1}^3
-\left\lceil
\frac{3(2B_{n-1}+C_{n-1})}{164}
\right\rceil.	
\end{equation*}
This completes the proof.
\end{proof}
\section*{Declarations}
\textbf{Conflict of interest:} 
The authors declare that they do not have conflict of interests. \vspace{0.2cm}\\
\textbf{Funding:}
No funding was received.

\end{document}